\def\A{{\mathbb A}}
\def\B{{\mathbb B}}
\def\H{{\mathbb H}}
\def\I{{\mathbb I}}
\def\M{{\mathbb M}}
\def\P{{\mathbb P}}
\def\Q{{\mathbb Q}}
\def\R{{\mathbb R}}
\def\S{{\mathbb S}}
\def\Z{{\mathbb Z}}
\def\Ho{{\bf Ho}}
\let\al\alpha
\let\bb\beta
\def \ext{\mathop{\sf Ext}\nolimits}
\newtheorem{Pro}{Proposition}
\newtheorem{Le}[Pro]{Lemma}
\newtheorem{The}[Pro]{Theorem}
\newtheorem{Co}[Pro]{Corollary}
\theoremstyle{definition}
\newtheorem{De}[Pro]{Definition}
\theoremstyle{remark}
\newtheorem{Rem}[Pro]{Remark}
\def\al{{\alpha}}
\def\bb{{\beta}}
\def\Ty{{\frak T}{\frak y}{\frak p}{\frak e}{\frak s}}
 \def\ta{{\frak T}}
\def\xyma{\xymatrix@M.7em}
\begin{document}

\title{ Projective and injective objects in symmetric categorical groups}

\author[T. Pirashvili]{Teimuraz  Pirashvili}
\address{
Department of Mathematics\\
University of Leicester\\
University Road\\
Leicester\\
LE1 7RH, UK} \email{tp59-at-le.ac.uk}
\thanks{Research was partially supported by the GNSF Grant
ST08/3-387}

\maketitle

\hfill{\emph{ Dedicated to the memory  of Prof. V. K. Bentkus}}

\

Categorical rings (called also 2-rings)  were introduced in
\cite{cat_rings}.  Categorical modules (called also 2-modules) over
a categorical ring  were introduced in \cite{vincent}. Categorical
modules and symmetric categorical groups \cite{ext_sym_cat_gr} are
examples of  abelian 2-categories studied in \cite{dupont} (for
other examples of abelian 2-categories see \cite{ab-2-ab}).
Projective objects in the framework of symmetric categorical groups
 were introduced in \cite{ext_sym_cat_gr} have an obvious generalization
 to the case of abelian 2-categories. Hence by duality one can also talk on
 injective objects in any abelian 2-category.
 Moreover in \cite{ext_sym_cat_gr} the authors
conjectured that the abelian 2-category of symmetric categorical
groups have enough projective objects. In the course of our work
\cite{2-robi} we noted that  the 2-category of symmetric categorical
groups have enough projective and injective objects. Moreover this
statement is a trivial consequences of the cohomological description
of abelian 2-category of symmetric categorical groups obtained first
in \cite{sinh}. Using base change argument this result also yields
that the 2-category of categorical modules over any categorical ring
have enough projective and injective objects.

Quite recently these results were announced in \cite{china} but with
wrong proofs (Lemmata 3 and 11 of \emph{loc. cit.} are both wrongs).
Here we give our original proofs.

A groupoid enrich category  $\ta$  is a 2-category such that any
2-arrow is invertible. If $\ta$ is a groupoid enrich category  then
we use the word ''morphism'' for  $1$-morphisms and we use the word
''track'' for 2-morphisms.   We let $\Ho(\ta)$ be the corresponding
homotopy category. If $f,g:A\to B$ are morphism in $\ta$, then we
say that $f$ is homotopic to $g$ if there exists a track from $f$ to
$g$. Let $\Ho(\ta)$ be the corresponding homotopy category. Thus
 objects of $\Ho(\ta)$ are the same as of
$\ta$, while morphisms in $\Ho(\ta)$ are homotopic classes of
morphisms in $\ta$.

Symmetric categorical groups
 form a groupoid enrich category which is denoted by ${\bf
SymCatGr}$. In particular one can form the homotopy category of the
2-category of symmetric categorical groups. We will denote the
corresponding homotopy category by $\Ho$. This category has the
following nice description which follows from the classical results
of H.X.Sinh  \cite{sinh}.

First we fix some notations. If $\S$ is a symmetric categorical
group then we let $\pi_0(\S)$ and $\pi_1(\S)$ denote respectively
the abelian group of connected components of $\S$ and the abelian
group of all automorphisms of the neutral object of $\S$. For
symmetric categorical groups $\S_1$ and $\S_2$ we have a groupoid
(in fact a symmetric categorical group \cite{ext_sym_cat_gr}) ${\bf
Hom}(\S_1,\S_2)$. To describe $\pi_i({\bf Hom}(\S_1,\S_2))$ ,
$i=0,1$ we need to introduce the  category $\Ty$. Objects of the
category $\Ty$ are triples $\A=(A_0,A_1,\al)$,  where $A_i$ is an
abelian group, $i=0,1$ and
$$\al\in \hom(A_0/2A_0,A_1)=\hom(A_1,\, _2A_1)$$
Here for an abelian group $A$ we set $$_2A=\{a\in A| 2a=0\}$$
 A morphism $f$ from $\A=(A_0,A_1,\al)$ to $\B=(B_0,B_1,\bb)$ in $\Ty$ is
given by a pair $f=(f_0,f_1)$, where $f_0:A_0\to B_0$ and
$f_1:A_1\to B_1$ are homomorphisms of abelian groups such that $\bb
f_0 = f_1\al$.

Let $\S$ be a symmetric categorical group. We put
$$type(\S):=(\pi_0(\S),\pi_1(\S),k_\S)$$
where $k_\S$ is the homomorphism induced by the commutativity
constrants in $\S$. Both categories $\Ho$ and $\Ty$ are additive and
the functor
$$type:\Ho\to \Ty$$
is additive.  According to \cite{sinh} the functor $type$ is full,
essentially surjective on objects and the kernel of $type$
(morphisms which goes to zero) is a square zero ideal of $\Ho$. It
follows then that the functor $type$ reflects isomorphisms and
induces a bijection on the isomorphism classes of objects. More
precisely, for any symmetric categorical groups $\S_1$ and $\S_2$
one has a short exact sequence of abelian groups
\begin{equation}\label{sinh0}
0\to \ext(\pi_0(\S_1),\pi_1(\S_2))\to \pi_0({\bf Hom}(\S_1,\S_2))\to
\Ty(type(\S_1),type(\S_2))\to 0
\end{equation} Furthermore one has also an isomorphism of abelian
groups
\begin{equation}\label{sinh1}
\pi_1({\bf Hom}(\S_1,\S_2))\cong
\hom(\pi_0(\S_1),\pi_1(\S_1))\end{equation}
 These facts greatly simplifies to work with symmetric
categorical groups.

For a given object $\A$ of the category $\Ty$ we choose a symmetric
categorical group $H(\A)$ such that $type(H(\A))=\A$. Such object
exist and is unique up to equivalence. Moreover, for any morphism
$f:\A\to \B$ we  choose a morphism of symmetric categorical groups
$H(f):H(\A)\to H(\B)$, such that $type(H(f)=f$. The reader must be
aware that the assignments $\A\to H(\A)$, $f\mapsto H(f)$ does NOT
define a functor $\Ty\to \Ho$.

Recall that \cite{ext_sym_cat_gr} a morphism $F:\S_1\to \S_2$ of
symmetric categorical groups  is called \emph{essentially
surjective} (resp. \emph{faithful}) if it is epimorphism on $\pi_0$
(resp. monomorphism on $\pi_1$). A symmetric categorical group $\S$
is called \emph{projective}  provided for any essentially surjective
functor $F:\S_1\to \S_2$ and a morphism $G:\S\to \S_2$ there exist a
morphism $L:\S\to \S_1$ and a track from $FL\to G$.  Dually a
symmetric categorical group $\S$ is called \emph{injective} provided
for any faithful functor $F:\S_1\to \S_2$ and a morphism $G:\S_1\to
\S$ there exist a morphism $L:\S_2\to \S$ and a track from $LF$ to
$G$.

We can develop same sort of language in the category $\Ty$.
 A morphism $f=(f_0,f_1)$ in $\Ty$
is \emph{essentially surjective} if $f_0$ is epimorphism of abelian
groups. Moreover an object $\P$ in $\Ty$ is \emph{projective} of for
any essentially surjective morphism $f:\A\to \B$ in $\Ty$ the
induced map
$$\Ty(\P,\A)\to \Ty(\P,\B)$$ is surjective.

Dually, a morphism $f$ in $\Ty$ is \emph{faithful} provided $f_1$ is
injective and an object $\I=(I_0,I_1,\iota)$ of $\Ty$ is
\emph{injective} if for any faithful morphism $f:\A\to \B$ in $\Ty$
the induced map
$$\Ty(\B,\I)\to \Ty(\A,\I)$$ is surjective.

 It is clear that a morphism $F:\S_1\to \S_2$
of symmetric categorical groups is \emph{essentially surjective}
(resp. \emph{faithful}) iff $type(F): type(\S_1)\to type(\S_2)$ is
so in $\Ty$.

For an abelian group $M$ we introduce two objects in $\Ty$:
$$l(M):=(M,M/2M,id_{M/2M})$$
$$r(M)=(_2M,M,id_{_2M})$$

\begin{Le}\label{adj}  i) If $M$ is an abelian group and
$\A=(A_0,A_1,\al)$ is an object in $\Ty$, then
 one has following functorial isomorphisms of abelian groups
$$\Ty(l(M),\A)=\hom(M,A_0),$$
$$\Ty(\A,r(M))=\hom(A_1,M).$$

 ii) For any free abelian group $P$ the object $l(P)\in \Ty$ is
projective in $\Ty$, dually for any divisible abelian group $Q$ the
object $r(Q)\in \Ty$ is injective.

iii) For any free abelian group $P$ the symmetric categorical group
$H(l(P))$ is projective symmetric categorical group and dually for
any divisible abelian group $Q$ the triple $r(Q)$ is injective.
\end{Le}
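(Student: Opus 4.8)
The plan is to prove the three parts in order, since each rests on the previous one. For part (i), I would simply unwind the definitions. Given a morphism $f = (f_0, f_1) : l(M) \to \A$, we have $f_0 : M \to A_0$ and $f_1 : M/2M \to A_1$ subject to the compatibility $\al \circ (\text{reduction}) = f_1 \circ \mathrm{id}_{M/2M}$, i.e. $f_1$ is forced to be the composite $M/2M \xrightarrow{\bar f_0} A_0/2A_0 \xrightarrow{\al} A_1$. Hence $f$ is entirely determined by $f_0$, and any $f_0 \in \hom(M, A_0)$ extends (uniquely) to such an $f$; this gives the natural bijection $\Ty(l(M), \A) = \hom(M, A_0)$. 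The second isomorphism is dual: a morphism $\A \to r(M) = ({}_2M, M, \mathrm{id})$ consists of $g_0 : A_0 \to {}_2M$ and $g_1 : A_1 \to M$ with $\mathrm{id}_{{}_2M} \circ \bar g_0 = g_1 \circ \al$, so $g_0$ is forced (it is $\al$ followed by $g_1$, using that $\al$ lands in ${}_2A_1$ and $g_1({}_2A_1) \subseteq {}_2M$) and $g$ is determined by $g_1 \in \hom(A_1, M)$. Naturality in both variables is immediate from the construction.

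For part (ii), I would combine (i) with the behaviour of essentially surjective and faithful morphisms. If $f : \A \to \B$ is essentially surjective, then $f_0 : A_0 \to B_0$ is an epimorphism of abelian groups; applying the isomorphism of (i), the map $\Ty(l(P), \A) \to \Ty(l(P), \B)$ is identified with $\hom(P, A_0) \to \hom(P, B_0)$, which is surjective precisely because $P$ is projective (i.e. free) in the category of abelian groups. Dually, if $f : \A \to \B$ is faithful then $f_1 : A_1 \to B_1$ is a monomorphism, and $\Ty(\B, r(Q)) \to \Ty(\A, r(Q))$ is identified via (i) with $\hom(B_1, Q) \to \hom(A_1, Q)$, which is surjective because $Q$ is injective (i.e. divisible) in abelian groups. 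So $l(P)$ is projective and $r(Q)$ is injective in $\Ty$.

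For part (iii), the strategy is to transport projectivity/injectivity across the functor $type : \Ho \to \Ty$, using the structural facts quoted from Sinh: $type$ is full, essentially surjective on objects, reflects isomorphisms, and its kernel is a square-zero ideal; more precisely we have the exact sequence (\ref{sinh0}). Let $F : \S_1 \to \S_2$ be essentially surjective and $G : H(l(P)) \to \S_2$ a morphism. Then $type(F)$ is essentially surjective in $\Ty$, and by part (ii) the class of $type(G)$ in $\Ty(l(P), type(\S_2))$ lifts along $type(F)$ to some $h \in \Ty(l(P), type(\S_1))$. Since $type$ is full, choose $L : H(l(P)) \to \S_1$ with $type(L) = h$; then $type(FL) = type(F)\circ h = type(G)$, so $FL$ and $G$ become equal in $\Ho$, i.e. there is a track $FL \to G$. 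This is exactly the definition of $H(l(P))$ being projective. The injective case is entirely dual, using a faithful $F$, a morphism $G : \S_1 \to H(r(Q))$, part (ii) to lift $type(G)$, and fullness of $type$ to realize the lift.

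The main obstacle is conceptual rather than computational: making sure that "lifting up to a track" in ${\bf SymCatGr}$ corresponds exactly to "lifting on the nose" in $\Ho$ after applying $type$, and that fullness of $type$ gives back an actual morphism whose image is the desired lift. In other words, one must be careful that $type(G)$ lands in the $\Ty$-component of the exact sequence (\ref{sinh0}) and that the $\ext$-term does not obstruct anything here — it does not, because we only need \emph{a} lift, not a unique one, and surjectivity onto the $\Ty$-component together with fullness of $type$ suffices. The $\pi_1$-level statement (\ref{sinh1}) plays no role in existence; it would only matter for counting tracks. Once this translation is set up cleanly, parts (ii) and (iii) are formal.
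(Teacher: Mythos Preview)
Your plan for parts (i) and (ii) is correct and matches the paper, which simply declares them ``obvious.'' Your approach to part (iii) is also the route the paper takes, but there is a genuine gap in your reasoning at the final step.

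You argue: choose $L$ with $type(L)=h$ (using fullness), so $type(FL)=type(G)$, ``so $FL$ and $G$ become equal in $\Ho$.'' This last implication is \emph{not} automatic. By the exact sequence (\ref{sinh0}), the map
\[
\pi_0\bigl({\bf Hom}(H(l(P)),\S_2)\bigr)\longrightarrow \Ty\bigl(l(P),type(\S_2)\bigr)
\]
has kernel $\ext(P,\pi_1(\S_2))$, so two morphisms with the same image under $type$ need not be homotopic in general. Your attempted justification (``we only need \emph{a} lift, not a unique one, and surjectivity onto the $\Ty$-component together with fullness of $type$ suffices'') addresses the wrong end of the sequence: it speaks to the ambiguity in choosing $L$, not to whether $FL$ and $G$ coincide in $\Ho$. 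And modifying $L$ by a class in $\ext(P,\pi_1(\S_1))$ does not in general let you kill an arbitrary class in $\ext(P,\pi_1(\S_2))$, since $\ext(P,\pi_1(F))$ need not be surjective.

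The missing ingredient is exactly what the paper makes explicit: because $P$ is \emph{free}, $\ext(P,\pi_1(\S_i))=0$, so (\ref{sinh0}) collapses to the isomorphism
\[
\pi_0\bigl({\bf Hom}(H(l(P)),\S_i)\bigr)\cong \Ty\bigl(l(P),type(\S_i)\bigr)\cong \hom(P,\pi_0(\S_i))
\]
(this is the paper's (\ref{sinh2})), and then $type(FL)=type(G)$ genuinely forces $FL\simeq G$. Dually, for $H(r(Q))$ one needs $\ext(\pi_0(\S_1),Q)=0$, which holds because $Q$ is divisible. Once you insert this vanishing of the $\ext$-term, your argument is complete and coincides with the paper's.
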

\begin{proof} i) and ii) are obvious. Let $F:\S_1\to \S_2$ be an
 essentially surjective morphism of symmetric categorical groups and
 $G:H(l(P))\to \S_2$ be a morphism of symmetric categorical groups.
 Apply the functor $type$ to get a essentially surjective morphism
 $type(F):type(\S_1)\to type(\S_2)$ in $\Ty$ and a morphism
$type(G):l(P)\to type(\S_2)$ in $\Ty$. Since
$\pi_0(F):\pi_0(\S_1)\to\pi_0(\S_2)$ is an epimorphism of abelian
groups, $F$ is a free abelian group the homomorphism $\pi_0(G):P\to
\pi_0(\S_2)$ has a lifting to the homomorphism $P\to \pi_0(\S_1)$
 Since $P$ is free abelian it follows from the exact sequence
(\ref{sinh0}) that  for $i=0,1$ one has an isomorphism
\begin{equation}\label{sinh2}\pi_0({\bf Hom}(H(l(P)),\S_i))\cong \Ty(l(P),type(\S_i))\cong
\hom(P,\pi_0(\S_i))\end{equation} Take a morphism $L:H(l(P)\to \S_1$
of symmetric categorical groups  which corresponds to the
homomorphism $P\to \pi_0(\S_1)$. By our construction one has an
equality $type(FL)=type(G)$, which imply that the class of $FL$ and
of $G$ in $\pi_0({\bf Hom}(H(l(P)),\S_1))$ are the same. Thus there
exist a track from $FL$ to $G$. This shows that $H(l(P))$ is a
projective symmetric categorical group. a dual argument works for
injective objects.

\end{proof}

\begin{Pro}

The 2-category of symmetric categorical groups have enough
projective and injective objects.

\end{Pro}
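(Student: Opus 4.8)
The plan is to realize every symmetric categorical group $\S$ as the target of an essentially surjective morphism from a projective object and as the source of a faithful morphism into an injective object; the two objects needed are exactly the ones manufactured in Lemma~\ref{adj}, and the only inputs from outside the present framework are the elementary facts that in the category of abelian groups every object is a quotient of a free abelian group and a subobject of a divisible abelian group.

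Concretely, for enough projectives I would proceed as follows. Given $\S$, pick a free abelian group $P$ and an epimorphism $\varphi\colon P\onto\pi_0(\S)$. By Lemma~\ref{adj}(i) the group $\hom(P,\pi_0(\S))$ is canonically identified with $\Ty(l(P),type(\S))$, so $\varphi$ corresponds to a morphism $\bar\varphi\colon l(P)\to type(\S)$ in $\Ty$. Since $type\colon\Ho\to\Ty$ is full, there is a morphism of symmetric categorical groups $L\colon H(l(P))\to\S$ with $type(L)=\bar\varphi$; as $type$ acts on morphisms by $F\mapsto(\pi_0(F),\pi_1(F))$, this forces $\pi_0(L)=\varphi$, hence $L$ is essentially surjective. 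By Lemma~\ref{adj}(iii) the object $H(l(P))$ is projective, so $L\colon H(l(P))\to\S$ is an essentially surjective morphism from a projective object, as required. Dually, given $\S$, pick a divisible abelian group $Q$ and a monomorphism $\psi\colon\pi_1(\S)\incl Q$; using Lemma~\ref{adj}(i) to view $\psi$ as a morphism $type(\S)\to r(Q)$ in $\Ty$ and the fullness of $type$ to lift it, one obtains $L\colon\S\to H(r(Q))$ with $\pi_1(L)=\psi$, so $L$ is faithful, and $H(r(Q))$ is injective by Lemma~\ref{adj}(iii).

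I do not expect a genuine obstacle here: essentially all the work has already been done in the cohomological description of ${\bf SymCatGr}$ recalled above (the short exact sequence~\eqref{sinh0}, the isomorphism~\eqref{sinh1}, and the fact that $type$ is full) and in Lemma~\ref{adj}. The only points that need a word of justification are that the lift $L$ furnished by fullness of $type$ genuinely realizes the prescribed homomorphism on $\pi_0$ (resp. on $\pi_1$) — which holds because $type$ is constant on homotopy classes and computes precisely $(\pi_0,\pi_1)$ on morphisms — and that \emph{essentially surjective} and \emph{faithful} for morphisms of symmetric categorical groups mean exactly surjectivity on $\pi_0$ and injectivity on $\pi_1$, which is the definition recalled before the statement. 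If one wants full projective and injective resolutions rather than a single step, one iterates the construction on the homotopy kernels (resp. cokernels), but for the statement as phrased a single cover on each side suffices.
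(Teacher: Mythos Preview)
Your proposal is correct and follows essentially the same route as the paper: construct the appropriate morphism in $\Ty$ via Lemma~\ref{adj}(i), lift it to the level of symmetric categorical groups, and invoke Lemma~\ref{adj}(iii) for projectivity/injectivity of the source/target. The only cosmetic difference is that you appeal directly to the fullness of $type$ to lift, whereas the paper invokes the explicit isomorphism~(\ref{sinh2}) (and its injective analogue) coming from the short exact sequence~(\ref{sinh0}) with vanishing $\ext$-term; since fullness of $type$ is precisely the surjectivity of the right-hand map in~(\ref{sinh0}), this is the same argument, and in fact your version uses slightly less than the paper's.
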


\begin{proof} Let $\S$ be a symmetric categorical group. Choose a
free abelian group $P$ and epimorphism of abelian groups $f_0:P\to
\pi_0(\S)$. By Lemma \ref{adj} it has a unique extension to a
morphism $f=(f_0,f_1):l(P)\to type(\S)$ which is essentially
surjective. Since $P$ is  is a free abelian group, we have the
isomorphism (\ref{sinh2}), which show that   there exist a morphism
of symmetric categorical groups $H(l(P)) \to \S$ which realizes
$f_0$ on the level of $\pi_0$. Clearly this morphism does the job.

Dually, choose a monomorphism $g_1:\pi_1(\S) _1\to Q$ with divisible
abelian group $Q$. By Lemma \ref{adj} it has the unique extension as
a morphism $g:type(\S)\to r(Q)$ which is faithful by the
construction. Since $\pi_1(r(Q))=Q$ is injective object in the
category of abelian groups by the short exact sequence (\ref{sinh0})
we have
$$\pi_0({\rm Hom}(\S, H(r(Q))))\cong \Ty(type(\S),r(Q))\cong
\hom(\pi_1(\S),Q)$$ which shows that $g$ can be realized as a
morphism of symmetric categorical groups and we get the result.
\end{proof}

\begin{Pro} Let $\R$ be a categorical group. Then the category of
categorical right $\R$-modules have enough projective and injective
objects.
\end{Pro}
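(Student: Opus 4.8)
The plan is to reduce the statement to the previous Proposition by a base change argument. Write $\R\mod$ for the 2-category of categorical right $\R$-modules and let $U\colon\R\mod\to{\bf SymCatGr}$ be the forgetful 2-functor, which assigns to a categorical $\R$-module its underlying symmetric categorical group. Since $\R$ has a unit object, $U$ has a left adjoint $F=-\t\R$ (extension of scalars, the ``free module'' functor) and a right adjoint $G={\bf Hom}(\R,-)$ (the coinduced module); moreover $F\dashv U\dashv G$ is an adjunction of groupoid enriched categories, so that for every symmetric categorical group $\S$ and every categorical $\R$-module $\M$ transposition furnishes equivalences of groupoids ${\bf Hom}_\R(F\S,\M)\simeq{\bf Hom}(\S,U\M)$ and ${\bf Hom}_\R(\M,G\S)\simeq{\bf Hom}(U\M,\S)$, natural in both variables. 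I will take these structures for granted.

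The one property of $U$ that is needed is that it preserves essentially surjective morphisms and faithful morphisms. This is immediate: a morphism of categorical $\R$-modules is essentially surjective (resp.\ faithful) exactly when it is an epimorphism on $\pi_0$ (resp.\ a monomorphism on $\pi_1$), and $U$ leaves the underlying groupoid, hence $\pi_0$ and $\pi_1$, untouched. Combined with the adjunction this gives, in the usual formal way, that $F$ sends a projective symmetric categorical group to a projective categorical $\R$-module and that $G$ sends an injective symmetric categorical group to an injective categorical $\R$-module. For instance, given $\S$ projective, an essentially surjective $p\colon\M_1\to\M_2$ in $\R\mod$ and a morphism $g\colon F\S\to\M_2$: transpose $g$ to $\hat g\colon\S\to U\M_2$; since $Up$ is essentially surjective and $\S$ is projective, there is $m\colon\S\to U\M_1$ together with a track $Up\circ m\simeq\hat g$; transposing $m$ back and using naturality of the transposition (so that post-composition with $p$ corresponds to post-composition with $Up$) produces a lift $\ell\colon F\S\to\M_1$ of $g$ along $p$ together with a track from $p\ell$ to $g$. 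The injective case is dual.

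Now let $\M$ be a categorical right $\R$-module. By the previous Proposition there is a projective symmetric categorical group $\S$ and an essentially surjective morphism $q\colon\S\to U\M$. Let $\hat q\colon F\S\to\M$ be the transpose of $q$, so that $q=U(\hat q)\circ\eta_\S$ with $\eta$ the unit of $F\dashv U$. Applying $\pi_0$, the map $\pi_0(q)=\pi_0(\hat q)\circ\pi_0(\eta_\S)$ is an epimorphism, hence so is $\pi_0(\hat q)$; that is, $\hat q$ is essentially surjective, while $F\S$ is projective by the previous paragraph. Hence $\R\mod$ has enough projectives. Dually, choose an injective symmetric categorical group $\I$ and a faithful morphism $j\colon U\M\to\I$, let $\hat j\colon\M\to G\I$ be its transpose, so that $j=\epsilon_\I\circ U(\hat j)$ with $\epsilon$ the counit of $U\dashv G$; then $\pi_1(j)=\pi_1(\epsilon_\I)\circ\pi_1(\hat j)$ is a monomorphism, so $\pi_1(\hat j)$ is a monomorphism, i.e.\ $\hat j$ is faithful, while $G\I$ is injective. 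Thus $\R\mod$ also has enough injectives.

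The only genuine work is hidden in the first paragraph: one must construct the free categorical $\R$-module $\S\t\R$ and the coinduced module ${\bf Hom}(\R,-)$, and verify that they assemble into a groupoid enriched adjunction $F\dashv U\dashv G$ (equivalently, check the enriched triangle identities up to coherent track). This is base change for $2$-modules over a $2$-ring, and it is the step I expect to be the main obstacle; once it is in place, everything above is purely formal manipulation with adjunctions together with the observation that $U$ leaves $\pi_0$ and $\pi_1$ unchanged.
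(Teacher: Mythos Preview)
Your argument is correct, and for the injective half it coincides with the paper's proof: the paper also uses that ${\bf Hom}(\R,-)$ is right 2-adjoint to the forgetful functor and that the forgetful functor preserves faithful morphisms, and its composite $\M\to{\bf Hom}(\R,\M)\to{\bf Hom}(\R,\Q)$ is exactly your transpose $\hat j$. The only cosmetic difference is that the paper checks faithfulness of the two factors separately using the isomorphism $\pi_1({\bf Hom}(\S_1,\S_2))\cong\hom(\pi_0(\S_1),\pi_1(\S_2))$, whereas you factor $\pi_1(j)$ through $\pi_1(\hat j)$ directly.

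For the projective half the paper takes a slightly different and cheaper route: instead of constructing the left adjoint $-\t\R$, it observes via the Yoneda lemma that $\R$ itself is a projective right $\R$-module (since ${\bf Hom}_\R(\R,-)\simeq U$), and then ``easily deduces'' enough projectives, presumably by taking direct sums of copies of $\R$ indexed by a generating set for $\pi_0(\M)$. This avoids the construction of the tensor product of symmetric categorical groups that your approach needs, at the cost of being less symmetric with the injective argument. Your version has the advantage of a uniform formal treatment of both cases; the paper's has the advantage that only the right adjoint ${\bf Hom}(\R,-)$ needs to be built, and that one is comparatively straightforward. The concern you flag about constructing $F\dashv U$ as a groupoid-enriched adjunction is real, and is precisely what the paper sidesteps.
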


\begin{proof} By Yoneda Lemma for symmetric categorical groups the categorical ring $\R$
considered as a right $\R$-module is projective and from this fact
one easily deduces the statement on projective objects. For
injectivity we consider the 2-functor ${\bf Hom}(\R,-)$ from the
2-category of symmetric categorical groups to the 2-category of
categorical right $R$-modules. It is a right 2-adjoint to the
forgetful 2-functor. Since the forgetful functor is exact it follows
that the 2-functor ${\bf Hom}(\R,-)$ takes injective objects to
injective ones. Let $\M$ be a categorical left $\R$-module. Choose a
faithful morphism $\M\to \Q$ in the 2-category of symmetric
categorical groups with injective symmetric categorical group $\Q$.
Apply now the 2-functor ${\bf Hom}(\R,-)$. It follows from the
isomorphism (\ref{sinh1}) that  ${\bf Hom}(\R,\M)\to  {\bf
Hom}(\R,\Q)$ is a faithful morphism of right $\R$-modules. By the
same reasons the obvious morphism $\M\to {\bf Hom}(\R,\M)$ is also
faithful. Taking the composite we obtain a faithful morphism $\M\to
{\bf Hom}(\R,\Q)$ and hence the result.
\end{proof}

Note that the proof of the last statement is essentially the same as
it was for classical rings. The same is also true for the following
result and because of this we omit the proof. Recall that if $\ta$
is an additive 2-category and $M$ is an object in $\ta$ then one has
the categorical ring ${\bf Hom}(M)$ (compare
\cite{dupont},\cite{vincent}).

\begin{Pro} If $\ta$ is a 2-abelian category which posses a small
projective generator $P$, then $\ta$ is 2-equivalent to the category
of right categorical modules over the categorical ring  ${\bf
Hom}(P,P)$.
\end{Pro}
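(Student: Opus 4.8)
The plan is to run, in the $2$-categorical setting, exactly the argument that proves the classical fact that an abelian category with a small projective generator is a category of modules over the endomorphism ring of the generator. Write $R:={\bf Hom}(P,P)$ for the categorical ring of endomorphisms of $P$, abbreviate by $\modf R$ the $2$-category of right categorical $R$-modules, and consider the $2$-functor
$$\Phi:={\bf Hom}(P,-)\colon\ta\longrightarrow\modf R,\qquad M\longmapsto{\bf Hom}(P,M),$$
where ${\bf Hom}(P,M)$ carries the right $R$-module structure given by precomposition. First I would record the formal properties of $\Phi$: it is additive; since $P$ is projective it is exact in the sense appropriate to $2$-abelian categories; since $P$ is small it preserves coproducts, hence all colimits that exist in $\ta$; and $\Phi(P)=R$ is the free rank-one module, which by the previous Proposition (the ``Yoneda Lemma for symmetric categorical groups'' argument) is itself a small projective generator of $\modf R$.

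Next I would establish the Yoneda step: $\Phi$ restricts to a $2$-equivalence between the full sub-$2$-category of $\ta$ spanned by the coproducts $P^{(I)}$ of copies of $P$ and the full sub-$2$-category of $\modf R$ spanned by the free modules $R^{(I)}$. Here smallness of $P$ gives ${\bf Hom}(P,P^{(J)})\simeq R^{(J)}$, the universal property of coproducts gives ${\bf Hom}_\ta(P^{(I)},N)\simeq\prod_I{\bf Hom}(P,N)$ for every object $N$, and combining the two yields an equivalence of symmetric categorical groups
$${\bf Hom}_\ta\big(P^{(I)},P^{(J)}\big)\ \simeq\ \prod_I R^{(J)}\ \simeq\ {\bf Hom}_{\modf R}\big(R^{(I)},R^{(J)}\big),$$
natural in both variables and compatible with composition. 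The isomorphisms (\ref{sinh0}) and (\ref{sinh1}) of the relevant $\pi_0$'s and $\pi_1$'s are precisely what make this bookkeeping explicit and, in particular, an equivalence rather than merely a bijection on homotopy classes.

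Then I would run the usual resolution argument. Since $\ta$ has enough projectives and $P$ is a generator, every object $M$ of $\ta$ admits a free resolution, i.e.\ a complex of coproducts of copies of $P$ (of suitable finite length, the length being chosen large enough to capture both layers of homotopy and the $k$-invariant) from which $M$ is recovered up to equivalence as the corresponding $2$-(co)limit; by the previous Proposition the same holds for right categorical $R$-modules. For $2$-full-faithfulness one computes ${\bf Hom}_\ta(M,N)$ as the $2$-limit of the diagram obtained by applying ${\bf Hom}_\ta(-,N)$ to a free resolution of $M$ (legitimate because each term is projective), and observes that $\Phi$ carries this diagram to the corresponding one for $\Phi M,\Phi N$ by the Yoneda step; hence $\Phi$ induces equivalences on all Hom-symmetric categorical groups. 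For essential surjectivity one takes a free resolution of a given module $N$, lifts the maps between its free terms along the Yoneda-step equivalence to a complex of coproducts of $P$ in $\ta$, forms the corresponding $2$-(co)limit $M$, and uses exactness of $\Phi$ to conclude $\Phi(M)\simeq N$. Together these show $\Phi$ is a $2$-equivalence.

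I expect the main obstacle to be not any individual step but the $2$-categorical bookkeeping threaded through all of them: pinning down the correct notion of free resolution and of exactness in a $2$-abelian category, checking that $\Phi$ commutes with the $2$-(co)limits used to reconstruct an object from its resolution, and propagating the track data so that ``fully faithful'' genuinely means an equivalence of Hom-symmetric categorical groups. Once the dictionary between $2$-abelian categories and their categories of categorical modules (compare \cite{dupont},\cite{vincent}) is in place, each of these is routine --- which is exactly why the proof is ``essentially the same as for classical rings'' and can be omitted.
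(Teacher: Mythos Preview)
Your proposal is correct and follows precisely the approach the paper has in mind: the paper omits the proof entirely, saying only that it is ``essentially the same as it was for classical rings,'' and what you have written is exactly the classical Gabriel--Mitchell argument (${\bf Hom}(P,-)$ is exact and coproduct-preserving, hence a $2$-equivalence via the Yoneda step and free resolutions) transcribed into the $2$-abelian setting. The only quibble is your appeal to the exact sequences (\ref{sinh0}) and (\ref{sinh1}), which are specific to symmetric categorical groups rather than to a general $2$-abelian category $\ta$; in the general setting the ``equivalence on Hom-categorical groups'' step should be phrased purely in terms of the universal properties of coproducts and smallness of $P$, not via those particular formulas.
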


Consider the following symmetric categorical group $\H$. Objects of
the groupoid $\H$ are integers. If $n\not =m$ then there is no
morphism from $n$ to $m$, $n,m\in \Z$. The group of automorphisms of
$n$ is the cyclic group of order two with generator $\epsilon_n$,
$n\in \Z$. The monoidal functor is induced by the addition of
integers. The associativity and unite constrants are identity
morphisms, while the commutativity constrant $n+m\to m+n$ equals to
$\epsilon_{n+m}$. By our construction $\H$ is a small projective
generator in the 2-category of symmetric categorical groups. Hence
we obtained the following important fact.

\begin{Pro} The 2-category of symmetric categorical groups is
2-equivalent to the category of right categorical modules over the
categorical ring ${\bf Hom}(\H,\H)$.
\end{Pro}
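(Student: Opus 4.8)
The plan is to invoke the preceding Proposition about 2-abelian categories possessing a small projective generator, applied to the 2-category $\ta = {\bf SymCatGr}$ and the explicit object $\H$ just constructed. Thus the proof reduces to verifying two things: first, that $\H$ is a projective object of ${\bf SymCatGr}$; and second, that $\H$ is a generator, i.e.\ that the representable 2-functor ${\bf Hom}(\H,-)$ is (jointly) faithful/conservative in the appropriate 2-categorical sense. Granting these, the Proposition immediately yields a 2-equivalence between ${\bf SymCatGr}$ and the 2-category of right categorical modules over ${\bf Hom}(\H,\H)$, which is the desired conclusion.

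First I would compute $type(\H)$. By construction $\pi_0(\H) = \Z$ and $\pi_1(\H) = \Z/2\Z$, and the commutativity constraint $n+m \to m+n$ being $\epsilon_{n+m}$ means the Sinh invariant $k_\H : \pi_0(\H)/2\pi_0(\H) \to \pi_1(\H)$, equivalently $\Z/2\Z \to \Z/2\Z$, is the identity. Hence $type(\H) = (\Z, \Z/2\Z, \mathrm{id}) = l(\Z)$, the object associated to the free abelian group $P = \Z$ of rank one. Therefore $\H$ is equivalent to $H(l(\Z))$, and Lemma~\ref{adj}(iii) tells us at once that $\H$ is a projective symmetric categorical group. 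This disposes of the projectivity requirement without further work.

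Next I would check that $\H$ is a generator. Here I would use the cohomological description: by \eqref{sinh0} and \eqref{sinh1}, for any symmetric categorical group $\S$ one has $\pi_0({\bf Hom}(\H,\S)) \cong \Ty(l(\Z), type(\S)) \cong \hom(\Z, \pi_0(\S)) \cong \pi_0(\S)$, and $\pi_1({\bf Hom}(\H,\S)) \cong \hom(\pi_0(\H), \pi_1(\S)) \cong \hom(\Z, \pi_1(\S)) \cong \pi_1(\S)$. Thus the functor ${\bf Hom}(\H,-)$ recovers both homotopy invariants of $\S$ exactly, and since a morphism $F : \S_1 \to \S_2$ is an equivalence precisely when it induces isomorphisms on $\pi_0$ and $\pi_1$, the 2-functor ${\bf Hom}(\H,-)$ is conservative; one checks similarly that it detects essential surjectivity and faithfulness, which is the relevant generator property in a 2-abelian category. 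Smallness of $\H$ is clear since $\H$ has a set of objects and morphisms.

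Finally, with $\H \simeq H(l(\Z))$ established as a small projective generator, I would simply cite the previous Proposition to conclude that ${\bf SymCatGr}$ is 2-equivalent to right categorical modules over ${\bf Hom}(\H,\H)$. The main obstacle, such as it is, lies not in any deep argument but in pinning down the correct 2-categorical notion of ``generator'' so that the hypotheses of the cited Proposition are genuinely met — in particular making sure that ``$P$ detects equivalences'' is the right condition and that it follows from the $\pi_0$/$\pi_1$ computation above; everything else is a direct application of Lemma~\ref{adj} and the Sinh exact sequences, hence the authors' remark that the proof may be omitted.
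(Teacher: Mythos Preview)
Your proposal is correct and follows exactly the paper's approach: the paper's entire argument is the single sentence ``By our construction $\H$ is a small projective generator in the 2-category of symmetric categorical groups,'' after which Proposition~4 is invoked. You have simply made explicit what the paper leaves implicit, namely that $type(\H)=l(\Z)$ so that Lemma~\ref{adj}(iii) gives projectivity, and that the Sinh isomorphisms show ${\bf Hom}(\H,-)$ recovers $\pi_0$ and $\pi_1$ and hence $\H$ is a generator.
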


\end{document}